\theoremstyle{plain}
\newtheorem{theo}{Theorem}
\newtheorem*{theo*}{Theorem}
\newtheorem{prop}[theo]{Proposition}
\newtheorem{cor}[theo]{Corollary}
\newtheorem*{prop*}{Proposition}
\newtheorem{lem}[theo]{Lemma}
\theoremstyle{definition}
\newtheorem{defin}[theo]{Definition}
\newtheorem{problem}[theo]{Problem}
 \DeclareMathOperator{\s}{s_{\mathbb R}}
\DeclareMathOperator{\Z}{\mathcal{Z}}\DeclareMathOperator{\R}{\mathbb R\mathcal{Z}}
\DeclareMathOperator{\z}{\mathbb Z}
\renewcommand{\Im}{\rm Im\,}
\renewcommand{\S}{S^0}
\renewcommand{\phi}{\varphi}
\renewcommand{\o}{\omega}
\renewcommand{\a}{\textit{\textbf{a}}}
\renewcommand{\L}{\rm Ls}
\newcommand{\e}{\textit{\textbf{e}}}
\DeclareMathOperator{\x}{\textit{\textbf{x}}}
\newcommand {\ib}[1]{\textit{\textbf{#1}}}
\newcommand {\mat}{\rm Mat}
\begin{document}
\title{\bf\Large{Criterion for the Buchstaber invariant of simplicial complexes to be equal to two.}}
\author{Nickolai Erokhovets \thanks{The work was partially supported by Dmitry Zimin's "Dynasty"
foundation, by the Russian Government project 11.G34.31.0053, by the Russian
President grants MD-2253.2011.1 and NS-4995.2012.1, and by the RFBR grants No
11-01-00694-a and 12-01-92104-JF-a}}
\date{}
\maketitle

\begin{abstract} In this paper we study the Buchstaber
invariant of simplicial complexes, which comes from toric topology. With each
simplicial complex $K$ on $m$ vertices we can associate a moment-angle
complex $\mathcal Z_K$ with a canonical action of the compact torus $T^m$.
Then $s(K)$ is the maximal dimension of a toric subgroup that acts freely on
$\mathcal Z_K$. We develop the Buchstaber invariant theory from the viewpoint
of the set of minimal non-simplices of $K$. It is easy to show that $s(K)=1$
if and only if any two and any three minimal non-simplices intersect. For
$K=\partial P^*$, where $P$ is a simple polytope, this implies that $P$ is a
simplex. The case $s(P)=2$ is such more complicated. For example, for any
$k\geqslant 2$ there exists an $n$-polytope with $n+k$ facets such that
$s(P)=2$. Our main result is the criterion for the Buchstaber invariant of a
simplicial complex $K$ to be equal to two.
\end{abstract}

\section{Introduction.}
For the introduction to toric topology see \cite{BP02}. \emph{Moment-angle
space} is a key notion of toric topology. It was introduced by M.~Davis and
T.~Januszkiewicz in \cite{DJ91}. In our paper we use the following
construction (see \cite{BP02}).

Let $K=\{\sigma\subset [m]=\{1,2,\dots,m\}\}$ be a simplicial complex on $m$
vertices. For the pair of topological spaces $(X,A)$, $A\subseteq X$, define
the $K$-power as
$$
(X,A)^K=\{(x_1,\dots,x_m)\in X^m\colon \{i:x_i\notin A\}\in K\}.
$$
In particular cases $(D^2,S^1)$, where $D^2=\{\ib{z}\in\mathbb C\colon
|z|\leqslant 1\}$, $S^1=\{\ib{z}\in\mathbb C\colon |z|= 1\}$, and
$(D^1,S^0)$, where $D^1=\{x\in\mathbb R\colon |x|\leqslant 1\}$,
$\S=\{\pm1\}$, we obtain a \emph{moment-angle complex} $\Z_K$ and a
\emph{real moment-angle complex} $\R_K\subset\Z_K$.

There are canonical coordinate actions of $T^m=(S^1)^m$ on $\Z_K$, and
$(\S)^m$ on $\R_K$. We will use the isomorphisms
\begin{gather*}
(\mathbb R^m/\mathbb Z^m)\simeq T^m\colon (\phi_1,\dots,\phi_m)\to(e^{2\pi
i \phi_1},\dots,e^{2\pi i \phi_m}), \text{ and }\\
\z_2^m\simeq (\S)^m\colon
(\alpha_1,\dots,\alpha_m)\to((-1)^{\alpha_1},\dots,(-1)^{\alpha_m}), \text{
where }\z_2=\{0,1\}.
\end{gather*}

For the simplex $\sigma\in K$ define the coordinate subgroup
$$
T^{\sigma}=\{(t_1,\dots,t_m)\in T^m\colon\{i\colon t_i\ne 1\}\subset
\sigma\}.
$$
Let $\ib{x}=(x_1,\dots,x_m)\in\Z_K$. Define $\sigma(\ib{x})=\{i\in[m]\colon
x_i=0\}\in K$. Then the stabilizer $T^m_{\x}$ of the point $\x$ is
$T^{\sigma(\ib{x})}$, and $K=\{\sigma(\x)\colon \x\in\Z_K\}$.

\begin{defin}
A \emph{Buchstaber invariant} $s(K)$ is the maximal dimension $s$ of the
toric subgroup $H\subset T^m$, $H\simeq T^s$, that acts freely on $\Z_K$. A
\emph{real Buchstaber invariant} $\s(K)$ is the maximal dimension $s$ of the
subgroup $H_2\subset \z_2^m$ that acts freely on $\R_K$.
\end{defin}
For a simple polytope $P$ define $s(P)$ as the Buchstaber invariant $s(K)$ of
the boundary complex $K=\partial P^*$ of the polar simplicial polytope.
Similarly, $\s(P)=\s(\partial P^*)$.

If the subgroup $H\subset T^m$ acts freely on $\Z_K$, then the subgroup
$H_2=H\cap(S^0)^m$ acts freely on $\R_K$, therefore $s(K)\leqslant \s(K)$ for
all $K$. It can be shown that $\s(K)\leqslant m-\dim K-1$.

\begin{problem}[Victor\,M. Buchstaber, 2002] To find an EFFECTIVE
combinatorial description of $s(K)$.
\end{problem}

The Buchstaber invariant has been studied since 2001. The problem was
originally formulated and studied for simple polytopes. In the case of simple
$n$-polytope $P$ with $m$ facets we have $1\leqslant s(P)\leqslant m-n$.
I.~Izmestiev \cite{Iz01a,Iz01b} proved the estimate $s(P)\geqslant
m-\gamma(P)$, where $\gamma(P)$ is the chromatic number of $P$, and found the
lower bound in terms of the group of projectivities (see \cite{Jo01}) of $P$.
The case of simplicial complexes that are skeleta of a simplex was considered
by M.~Masuda and Y.~Fukukawa \cite{FM09}. A.~Ayzenberg \cite{Ayz10} proved
that $s(\Gamma)=m-\lceil\log_2(\gamma(\Gamma)+1)\rceil$ for any graph
$\Gamma$. For the theory of the Buchstaber invariant see
\cite{Ayz10,Ayz11,Er08,Er09,Er11}. In this article we develop the idea that
appears after reading \cite{FM09}: to consider the problem from the viewpoint
of the set of minimal non-simplices of $K$.

I'm grateful to Victor M.~Buchstaber for the discussion of the results of
this paper. During the discussion he suggested to consider the following
modification of his problem.

\vspace{2mm}

\noindent{\bf Problem 2*.} For any $r$ to find a combinatorial criterion for
the simplicial complex $K$ to have $s(K)=r$.
\section{Combinatorial descriptions}
\subsection{Minimal non-simplices}
The set $\omega\subset[m]$ is called a \emph{non-simplex}, if $\omega\notin
K$. Non-simplex $\omega$ is \emph{minimal}, if it's any proper subset belongs
to $K$. Denote by $N(K)$ the set of all minimal non-simplices. We have
$\sigma\in K$ if and only if it does not contain any $\omega\in N(K)$,
therefore $N(K)$ determines $K$ in a unique way.

Minimal non-simplex description of $K$ is convenient for many reasons. For
example, $K$ is a simplex itself if and only if $N(K)=\varnothing$. $K$ is
\emph{flag} if and only if $|\omega|=2$ for any $\omega\in N(K)$. The
\emph{Stanley-Reisner ring} is also defined in these terms:
$$
\mathbb Z[K]=\mathbb Z[\ib{v}_1,\dots,\ib{v}_m]/(v_{i_1}\dots v_{i_k}\colon \{i_1,\dots,i_k\}\in N(K)).
$$
It was proved in \cite{Er09} that if $[m]=\omega_1\cup\dots\cup\omega_l$,
where $\omega_i$ are non-simplices, then
$$
s(K)\geqslant m-\sum\limits_{i=1}^l|\omega_i|+l.
$$
In particular, $s(K)\geqslant l$, if the non-simplices are pairwise disjoint.
\subsection{Buchstaber invariant}
Any subgroup $H\subset T^m$, $H\simeq T^k$, can be described in two dual ways:\\
1) Parametrically:
$$
H=\{(e^{2\pi i
(S_1^1\psi_1+\dots+S_1^k\psi_k)},\dots,e^{2\pi i(S_m^1\psi_1+\dots+S_m^k\psi_k)})\colon
(\psi_1,\dots,\psi_k)\in\mathbb R^k/\mathbb Z^k\},
$$
where $S=\{S_i^j\}\in{\rm Mat}_{m\times k}(\mathbb Z)$, and
$S$ has $k$ units on the diagonal in the canonical form.\\
2) As a kernel of the mapping $T^m\to T^{m-k}$:
$$
(e^{2\pi i \phi_1},\dots,e^{2\pi i\phi_m})\to (e^{2\pi i(\Lambda_1^1 \phi_1+\dots+\Lambda_1^m
\phi_m)},\dots, e^{2\pi i(\Lambda_{m-k}^1 \phi_1+\dots+\Lambda_{m-k}^m \phi_m)}),
$$
where $\Lambda=\{\Lambda_i^j\}\in{\rm Mat}_{(m-k)\times m}(\mathbb Z)$, and
$\Lambda$ has $m-k$ units on the diagonal in the canonical form.

These two descriptions, and two matrices $S$ and $\Lambda$ fit into the exact
sequences:
$$
\begin{CD}
\{1\} @>>>T^k@>>>T^m@>>>T^{m-k}@>>>\{1\}\\
0@>>>\mathbb Z^k@>S>>\mathbb Z^m@>\Lambda>>\mathbb Z^{m-k}@>>>0
\end{CD}
$$
The subgroup $H$ acts freely if and only if $H\cap T^m_{\x}=\{1\}$ for all
$\x\in \Z_K$. It is enough to consider the points $\x$ such that the simplex
$\sigma(\x)$ is maximal. Since $T^m_{\x}=T^{\sigma(\x)}$, we obtain that $H$
acts freely if and only if $H\cap T^{\sigma}=\{1\}$ for all maximal simplices
$\sigma\in K$. This leads to two dual combinatorial descriptions of $s(K)$.

Let us make the following notations: \\
$A^i$ -- the $i$-th row of the matrix $A$;\\
$A_j$ -- the $j$-th column of the matrix $A$;\\
$A^{\omega}$ -- the matrix, consisting of the rows $\{A^i\colon
    i\in\omega\}$;\\
$A_{\omega}$ -- the matrix, consisting of the columns $\{A_j\colon
    j\in\omega\}$;\\
$A^{\widehat\sigma}$ -- the matrix, obtained from $A$ by deletion
    of the rows $\{A^i\colon i\in\omega\}$;\\
$A_{\widehat\sigma}$ -- the matrix, obtained from $A$ by deletion
    of the columns $\{A_j\colon j\in\omega\}$.

\begin{prop}[\cite{BP02,Er09}]\label{spcomb}
\begin{itemize}
We have:
\item[(A)] $s(K)$ is the maximal $k$ that admits a matrix
    $S\in\mat_{m\times k}(\mathbb Z)$ satisfying the condition: for any
    maximal simplex $\sigma\in K$, $|\sigma|=r$, the columns of the
    matrix $S^{\widehat\sigma}$ form part of a basis in $\mathbb Z^{m-r}$
    (equivalently\footnote{This condition was used by HyunWoong Cho and
    JinHong Kim}, the rows $\{S^i\colon i\notin \sigma\}$ span $\mathbb
    Z^k$);
\item[(B)] $s(K)$ is the maximal $k$ that admits a matrix
    $\Lambda\in\mat_{(m-k)\times m}(\mathbb Z)$ satisfying the condition:
    for any maximal simplex $\sigma\in K$, $|\sigma|=r$, the columns
    $\{\Lambda_j\colon j\in\sigma\}$ form part of a basis in $\mathbb
    Z^{m-k}$ (equivalently, the rows of the matrix $\Lambda_{\sigma}$
    span $\mathbb Z^r$);
\end{itemize}
\end{prop}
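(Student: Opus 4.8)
The plan is to unwind the freeness criterion already recalled---$H\simeq T^k$ acts freely on $\Z_K$ if and only if $H\cap T^{\sigma}=\{1\}$ for every maximal simplex $\sigma\in K$---into a condition on the integer matrix that describes $H$, carrying this out once for the parametric matrix $S$ (yielding (A)) and once for the kernel matrix $\Lambda$ (yielding (B)). The common technical ingredient is an elementary lemma: for $B\in\mat_{p\times q}(\z)$ the induced homomorphism of tori $T^q\to T^p$ is injective if and only if the columns of $B$ are part of a basis of $\z^p$, if and only if the rows of $B$ span $\z^q$. All three conditions say that the Smith normal form of $B$ equals $\binom{I_q}{0}$, and the equivalences are read directly off this normal form, the row statement being the transpose of the column statement.

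For (A) I would argue as follows. A subgroup $H\simeq T^k$ is the image of the map $T^k\to T^m$ with matrix $S$ in canonical form, hence an embedding, so a point of $H$ with parameter $\psi\in\mathbb R^k/\z^k$ is trivial precisely when $\psi\in\z^k$, and it lies in $T^{\sigma}$ precisely when $S^i\psi\in\z$ for all $i\notin\sigma$, i.e. when $S^{\widehat\sigma}\psi\in\z^{m-r}$. Thus $H\cap T^{\sigma}=\{1\}$ is equivalent to the implication $S^{\widehat\sigma}\psi\in\z^{m-r}\Rightarrow\psi\in\z^k$, which is exactly injectivity of the torus homomorphism $T^k\to T^{m-r}$ induced by $S^{\widehat\sigma}$. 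By the lemma (with $p=m-r$, $q=k$) this is the stated condition on the columns of $S^{\widehat\sigma}$, equivalently on the rows $\{S^i\colon i\notin\sigma\}$. Running this over all maximal $\sigma$ and taking the largest admissible $k$ gives (A); one also notes that these conditions already force $S$ to have free cokernel, so restricting to matrices in canonical form loses nothing.

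For (B) I would argue dually. Here $H=\ker(\bar\Lambda\colon T^m\to T^{m-k})$, so $H\cap T^{\sigma}=\ker(\bar\Lambda|_{T^{\sigma}})$; identifying $T^{\sigma}\cong T^r$ by the coordinates in $\sigma$, the restriction $\bar\Lambda|_{T^{\sigma}}$ is the torus homomorphism $T^r\to T^{m-k}$ with matrix $\Lambda_{\sigma}$. Hence $H\cap T^{\sigma}=\{1\}$ is injectivity of this homomorphism, and the lemma (with $p=m-k$, $q=r$) turns it into the assertion that the columns $\{\Lambda_j\colon j\in\sigma\}$ are part of a basis of $\z^{m-k}$, equivalently that the rows of $\Lambda_{\sigma}$ span $\z^r$. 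Taking the largest admissible $k$ yields (B), and it computes the same number as (A) because $S$ and $\Lambda$ realize the same $H$ through the exact sequence $0\to\z^k\xrightarrow{S}\z^m\xrightarrow{\Lambda}\z^{m-k}\to0$.

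The step I expect to need the most care is the linear-algebra lemma together with the accompanying bookkeeping: checking that every freely acting $H\simeq T^k$ does arise from a matrix $S$ (resp. $\Lambda$) in canonical form, and conversely that a matrix satisfying the stated rank conditions for all maximal simplices genuinely defines an embedded $T^k$. Everything else is a mechanical substitution of the definitions of $T^{\sigma}$ and of the stabilizer $T^m_{\x}=T^{\sigma(\x)}$ into the criterion $H\cap T^{\sigma}=\{1\}$.
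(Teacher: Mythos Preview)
Your argument is correct and is the standard one: reduce freeness to $H\cap T^\sigma=\{1\}$ for all maximal $\sigma$, translate this intersection into injectivity of the appropriate torus homomorphism, and read off the matrix condition via the Smith normal form. The paper does not actually prove this proposition; it is simply cited from \cite{BP02,Er09}, so there is nothing to compare your route against beyond noting that what you have written is exactly the argument one finds in those references.

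One small remark on your closing caveat. You worry about checking that a matrix $S$ satisfying the condition for all maximal simplices really defines an embedded $T^k$. This is automatic: pick any maximal simplex $\sigma$; the condition says the rows $\{S^i:i\notin\sigma\}$ already span $\z^k$, so a fortiori all rows of $S$ span $\z^k$, and the Smith normal form of $S$ is $\binom{I_k}{0}$, giving the embedding. So the ``bookkeeping'' you flag is a one-line observation rather than a genuine difficulty.
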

Similarly in the real case.
\begin{prop}[\cite{BP02,Er09}]\label{sp2comb}
We have
\begin{itemize}
\item[(A2)] $\s(K)$ is the maximal $k$ that admits a matrix
    $S\in\mat_{m\times k}(\z_2)$ satisfying the condition: for any
    maximal simplex $\sigma\in K$ the columns of the matrix
    $S^{\widehat\sigma}$ are linearly independent
    (equivalently\footnote{This condition in a special case was used in
\cite{FM09}}, the rows $\{S^i\colon i\notin \sigma\}$ span $\z_2^k$);
\item[(B2)] $\s(K)$  is the maximal $k$ that admits a matrix
    $\Lambda\in\mat_{(m-k)\times m}(\z_2)$ satisfying the condition: for
    any maximal simplex $\sigma\in K$, $|\sigma|=r$, the columns
    $\{\Lambda_j\colon j\in\sigma\}$ are linearly independent
    (equivalently, the rows of the matrix $\Lambda_{\sigma}$ span
    $\z_2^r$);
\end{itemize}
\end{prop}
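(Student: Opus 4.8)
The plan is to transcribe, over the field $\z_2$, the two dual descriptions of a rank-$k$ subgroup of $T^m$ recalled in Section~2.2, and then to unwind the freeness condition just as in Proposition~\ref{spcomb}. Over $\z_2$ every subgroup $H_2\subseteq\z_2^m$ with $H_2\simeq\z_2^k$ is a $k$-dimensional $\z_2$-subspace, hence it is simultaneously the image of an injective linear map $S\colon\z_2^k\to\z_2^m$ and the kernel of a surjective linear map $\Lambda\colon\z_2^m\to\z_2^{m-k}$, and these fit into a short exact sequence $0\to\z_2^k\xrightarrow{S}\z_2^m\xrightarrow{\Lambda}\z_2^{m-k}\to0$. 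Permuting the coordinates of $\z_2^m$ brings $S$ to a form with a $k\times k$ identity block (respectively $\Lambda$ to a form with an $(m-k)\times(m-k)$ identity block), i.e. to the canonical form demanded in the statement. From now on I identify a matrix with the corresponding $\z_2$-linear map.

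Next I would compute the stabilizers. For $\x=(x_1,\dots,x_m)\in\R_K$ the coordinatewise $\z_2^m$-action fixes $\x$ precisely on the subgroup $(\z_2)^{\sigma(\x)}=\{(\alpha_1,\dots,\alpha_m)\colon\alpha_i=0\text{ for }i\notin\sigma(\x)\}$, where $\sigma(\x)=\{i\colon x_i=0\}$, and (as in the complex case) every $\sigma\in K$ arises as some $\sigma(\x)$. Since $\sigma\subseteq\tau$ implies $(\z_2)^{\sigma}\subseteq(\z_2)^{\tau}$, the subgroup $H_2$ acts freely on $\R_K$ if and only if $H_2\cap(\z_2)^{\sigma}=\{0\}$ for every maximal simplex $\sigma\in K$; then $\s(K)$ is the largest $k$ admitting such an $H_2$, equivalently such a matrix $S$, equivalently such a matrix $\Lambda$.

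It remains to rewrite $H_2\cap(\z_2)^{\sigma}=\{0\}$ as the asserted matrix conditions. In the parametric picture $H_2=S(\z_2^k)$, so $H_2\cap(\z_2)^{\sigma}=\{0\}$ says that no nonzero vector of $S(\z_2^k)$ is supported inside $\sigma$; equivalently, the composition $\z_2^k\xrightarrow{S}\z_2^m\to\z_2^m/(\z_2)^{\sigma}$ is injective. Identifying $\z_2^m/(\z_2)^{\sigma}$ with $\z_2^{[m]\setminus\sigma}$, this composition is exactly the submatrix $S^{\widehat\sigma}$ of the rows indexed by $[m]\setminus\sigma$, so the condition is that the columns of $S^{\widehat\sigma}$ be linearly independent over $\z_2$; passing to the transpose, injectivity of $S^{\widehat\sigma}$ is equivalent to surjectivity of its transpose, i.e. to the rows $\{S^i\colon i\notin\sigma\}$ spanning $\z_2^k$. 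This is (A2). Dually, $H_2=\ker\Lambda$, so $H_2\cap(\z_2)^{\sigma}=\ker\bigl(\Lambda|_{(\z_2)^{\sigma}}\bigr)$, and $H_2\cap(\z_2)^{\sigma}=\{0\}$ means precisely that the restriction of $\Lambda$ to $(\z_2)^{\sigma}$ — which is the submatrix $\Lambda_{\sigma}$ of the columns indexed by $\sigma$ — is injective, i.e. that the columns $\{\Lambda_j\colon j\in\sigma\}$ are linearly independent; by the same transpose argument this is equivalent to the rows of $\Lambda_{\sigma}$ spanning $\z_2^{r}$. This is (B2), and maximizing $k$ over all valid $S$ (or $\Lambda$) yields both descriptions of $\s(K)$.

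I expect no genuine obstacle here: the whole argument is linear algebra over $\z_2$, running parallel to the integral case Proposition~\ref{spcomb}. The single point that deserves emphasis — and the only real difference from the integral statement — is that over a field a linearly independent set automatically extends to a basis, so one must \emph{not} (and cannot usefully) require the relevant columns to form part of a basis; this is exactly why the $\z_2$-version of the criterion drops that clause. The only care needed is bookkeeping: matching the restriction and the corestriction of $S$ and $\Lambda$ with the submatrices $S^{\widehat\sigma}$ and $\Lambda_{\sigma}$, and verifying that the reduction to maximal simplices loses nothing.
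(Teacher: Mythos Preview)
Your argument is correct and is exactly the ``similarly in the real case'' reduction the paper indicates: the paper does not prove Proposition~\ref{sp2comb} but cites \cite{BP02,Er09} and directs the reader to mimic the proof of Proposition~\ref{spcomb} over $\z_2$, which is precisely what you do. Your added remark that over a field linear independence already implies extendability to a basis (so the ``part of a basis'' clause is redundant) is the one substantive simplification relative to the integral version, and it is correctly identified.
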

We call the matrix $A$ a \emph{$0/1$-matrix}, if it's entries are zeroes and
units.
\begin{lem}[\cite{Er09}]\label{lem:r23}
For a $0/1$-matrix $A$ of sizes $2\times 2$ or $3\times 3$ equality $\det
A=1\mod 2$ implies that $\det A=\pm1$.
\end{lem}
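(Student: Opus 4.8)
The statement to prove is Lemma~\ref{lem:r23}: for a $0/1$-matrix $A$ of size $2\times 2$ or $3\times 3$, if $\det A \equiv 1 \pmod 2$ then in fact $\det A = \pm 1$ over $\mathbb Z$. The natural approach is a direct but organized bound on $|\det A|$, combined with the parity constraint. First I would dispose of the $2\times 2$ case: the determinant of a $0/1$-matrix $\begin{pmatrix} a & b \\ c & d\end{pmatrix}$ is $ad - bc$, and since each of $ad$ and $bc$ lies in $\{0,1\}$, the determinant lies in $\{-1,0,1\}$; the hypothesis $\det A$ odd rules out $0$, leaving $\pm 1$. This case is essentially immediate.

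For the $3\times 3$ case the key point is the a priori estimate $|\det A| \leqslant 2$ for a $0/1$-matrix of size $3$. I would establish this by Laplace expansion along the first row: $\det A = a_{11}M_{11} - a_{12}M_{12} + a_{13}M_{13}$, where each $M_{1j}$ is (up to sign) a $2\times 2$ minor of a $0/1$-matrix, hence $M_{1j} \in \{-1,0,1\}$ by the $2\times 2$ analysis, and each $a_{1j}\in\{0,1\}$. This gives $|\det A| \leqslant 3$ crudely, so I need to shave off the value $3$: $|\det A| = 3$ would force all three terms $a_{1j}M_{1j}$ to be nonzero of the same sign, in particular $a_{11}=a_{12}=a_{13}=1$, and the three $2\times 2$ minors of the $2\times 3$ matrix formed by the last two rows would all equal $+1$ (or all $-1$). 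A short case check on the possible $0/1$ rows shows this is impossible — e.g. if $M_{11}$ and $M_{12}$ (minors obtained by deleting columns $1$ and $2$) are both nonzero, examining which $0/1$ entries in rows two and three can produce that already constrains the sign of $M_{13}$ to be opposite, contradicting equality of signs. (Alternatively one may cite the classical fact that the maximal determinant of an $n\times n$ $0/1$-matrix for $n=3$ is $2$, realized by the matrix of $K_4$'s incidence-type pattern.) Either way, $|\det A|\leqslant 2$.

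Once $|\det A|\leqslant 2$ is in hand, the lemma follows instantly: $\det A \in \{-2,-1,0,1,2\}$, and the hypothesis that $\det A$ is odd eliminates $-2,0,2$, leaving $\det A = \pm 1$.

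The main obstacle is the step excluding $|\det A| = 3$ in the $3\times 3$ case; everything else is a one-line parity observation. The cleanest way to handle that obstacle is probably the explicit small case analysis sketched above — there are only finitely many $0/1$ patterns for the two remaining rows once $a_{11}=a_{12}=a_{13}=1$ is forced — or a reduction modulo $2$ argument showing that a $0/1$-matrix whose reduction mod $2$ is invertible cannot simultaneously have all three first-row entries equal to $1$ with all three relevant $2\times 2$ minors sharing a sign. I would present whichever version is shortest to write out without a lengthy table.
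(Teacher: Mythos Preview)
Your argument is correct, but it follows a genuinely different route from the paper's. For the $2\times2$ case the two proofs are equivalent (the paper phrases it as ``one row must contain a $0$, reduce to $1\times1$'', which amounts to your observation that $ad-bc\in\{-1,0,1\}$). For the $3\times3$ case, however, the paper does not bound $|\det A|$ at all: instead it argues structurally. If some row has at least two zeros, cofactor expansion reduces to the $2\times2$ case. Otherwise every row lies in $\{(1,1,1),(0,1,1),(1,0,1),(1,1,0)\}$; repeated rows give $\det A=0$, and if all three rows have exactly one zero their sum is $0\bmod 2$, forcing $\det A$ even. Hence, up to row/column permutation, one row is $(1,1,1)$ and the other two are distinct weight-two vectors, which yields the single explicit matrix $\left(\begin{smallmatrix}1&1&0\\1&0&1\\1&1&1\end{smallmatrix}\right)$ with determinant $-1$.

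Your approach --- Laplace expansion plus $2\times2$ minor bounds to get $|\det A|\leqslant 3$, then a short case check ruling out $|\det A|=3$, then parity to eliminate $0,\pm2$ --- is more systematic and generalizes naturally to the question of maximal $0/1$-determinants. The paper's approach is a bit more ad hoc but has the advantage of pinning down the essentially unique matrix that survives, with no need to separately rule out $|\det A|=2$ or invoke the classical maximal-determinant fact. Either proof is perfectly acceptable; just be aware that the step excluding $|\det A|=3$ in your outline does require the small explicit check you allude to (forcing $a_{1j}=1$ and all three cofactors of equal sign, then deriving a contradiction from the $0/1$ constraints on the remaining two rows), so make sure to write that out rather than leave it as a parenthetical.
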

\begin{proof}
For $2\times 2$-matrices one of the rows should contain $0$, therefore we
come to the case $1\times 1$.

For a $3\times 3$-matrix $A$ if one of the rows has two zeroes, we come back
to the case $2\times 2$. If all the rows are different and have one zero and
two units, then their sum is equal to zero modulo two. Hence up to a
transposition of rows and columns we come to the case
$A=\left(\begin{smallmatrix}1&1&0\\1&0&1\\1&1&1\end{smallmatrix}\right),\;
\det A=-1$.
\end{proof}
Let us mention that for $k\times k$-matrices, $k\geqslant 4$, lemma is not
valid. For even $k$ a counterexample is given by the matrix
$$
A_k=\begin{pmatrix}0&1&1&\dots&1\\
1&0&1&\dots&1\\
&\vdots&&\ddots&\\
1&1&1&\dots&0
\end{pmatrix},\quad\det A_k=(-1)^{k-1}(k-1),
$$
and for odd -- by the matrix
$\left(\begin{smallmatrix}1&0\\0&A_{k-1}\end{smallmatrix}\right)$.

\begin{lem}(\cite{Er09})\label{lem:sr=s}
For $r=1,2,3$ we have: $s(K)\geqslant r$ if and only if $\s(K)\geqslant r$.
\end{lem}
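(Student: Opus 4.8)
The plan is to prove the two implications separately, and the point is that one of them is essentially free. The introduction records the inequality $s(K)\leqslant\s(K)$ valid for every $K$, so $s(K)\geqslant r$ implies $\s(K)\geqslant r$ with no restriction on $r$ whatsoever. Hence the whole work goes into the reverse implication for $r\in\{1,2,3\}$, and the idea there is to take a $\z_2$-matrix witnessing $\s(K)\geqslant r$ and simply reinterpret its $0/1$-entries as integers, obtaining a matrix that witnesses $s(K)\geqslant r$.

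Concretely, assume $\s(K)\geqslant r$. Using Proposition~\ref{sp2comb}(A2) and discarding all but $r$ of the columns of the resulting matrix, I would first get a matrix $S\in\mat_{m\times r}(\z_2)$ such that for every maximal simplex $\sigma\in K$ the rows $\{S^i\colon i\notin\sigma\}$ span $\z_2^r$ (discarding columns only helps here, since projecting a spanning set of $\z_2^{\s(K)}$ onto the first $r$ coordinates still spans $\z_2^r$). Let $S'\in\mat_{m\times r}(\z)$ be the $0/1$-matrix with literally the same entries, read now over $\z$. The claim to be checked is that $S'$ satisfies the condition of Proposition~\ref{spcomb}(A): for every maximal $\sigma$ the rows $\{S'^i\colon i\notin\sigma\}$ span $\z^r$. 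Granting this, Proposition~\ref{spcomb}(A) immediately gives $s(K)\geqslant r$, finishing the proof.

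To verify the claim I would fix a maximal simplex $\sigma$. Since the mod-$2$ rows $\{S^i\colon i\notin\sigma\}$ span $\z_2^r$, I can pick indices $i_1,\dots,i_r\notin\sigma$ so that the $r\times r$ submatrix $B$ with rows $S^{i_1},\dots,S^{i_r}$ is nonsingular over $\z_2$, i.e.\ $\det B\equiv1\pmod2$. This is precisely the place where $r\leqslant3$ enters: by Lemma~\ref{lem:r23}, a $0/1$-matrix of size $2$ or $3$ with odd determinant has determinant $\pm1$, and the size-$1$ case is trivial, so $\det B=\pm1$. Therefore the integral rows $S'^{i_1},\dots,S'^{i_r}$ already form a basis of $\z^r$, and \emph{a fortiori} $\{S'^i\colon i\notin\sigma\}$ spans $\z^r$.

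The hard part, and really the only non-formal ingredient, is the appeal to Lemma~\ref{lem:r23}: the argument is set up so that the sole obstruction is the implication ``odd determinant $\Rightarrow$ determinant $\pm1$'' for small $0/1$-matrices, which is exactly what breaks down for $r\geqslant4$ (the matrices $A_k$ above show that a $0/1$-matrix can be nonsingular modulo $2$ yet have determinant $\ne\pm1$), and this is why the statement is restricted to $r=1,2,3$. Everything else --- passing between the parametric and kernel descriptions of a subgroup of $T^m$ or $\z_2^m$, and between the integral and the mod-$2$ realizations of the freeness condition --- is already packaged into Propositions~\ref{spcomb} and~\ref{sp2comb}.
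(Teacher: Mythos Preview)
Your proof is correct and follows essentially the same approach as the paper: lift the $\z_2$-matrix witnessing $\s(K)\geqslant r$ to a $0/1$ integer matrix, and use Lemma~\ref{lem:r23} to turn the nonvanishing of an $r\times r$ minor mod $2$ into the statement that this minor equals $\pm1$ over $\z$, so that condition~(A) holds. The paper phrases the linear-algebra step via the column formulation of~(A) (the columns of $S^{\widehat\sigma}$ form part of a basis) rather than the row formulation you use, and it is less explicit than you are about passing from an $m\times\s(K)$ matrix to an $m\times r$ one, but the content is identical.
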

\begin{proof}
If the vectors  $\ib{a}_1,\dots,\ib{a}_r\in\z_2^l$ are linearly independent,
then there is an $r\times r$-minor equal to $1$. It follows from lemma
\ref{lem:r23} that it is equal to $\pm 1$ over $\mathbb Z$, therefore these
vectors form part of a basis in $\mathbb Z^l$. Hence the $0/1$-matrix $S$ for
$\s(K)$ satisfies condition (A) for $s(K)$, therefore $\s(K)\geqslant r$
implies $s(K)\geqslant r$.  The opposite implication follows from the fact
that $s(K)\leqslant \s(K)$.
\end{proof}
\subsection{Description of the Buchstaber invariant in terms of minimal non-simplices}
\begin{prop}\label{nonsp} Condition (A) is equivalent to following condition (A*): for any prime
$p$ and any nonzero vector $\ib{a}\in\mathbb \z_p^k$ there exists
$\omega(\ib{a})\in N(K)$ such that $\langle\ib{a},S^i\rangle\ne 0\mod p$ for
all $i\in\omega(\ib{a})$.
\end{prop}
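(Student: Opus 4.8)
The plan is to fix a matrix $S\in\mat_{m\times k}(\mathbb Z)$ and prove that the condition in (A) holds for $S$ if and only if (A*) holds for $S$; the coincidence of the maximal admissible $k$ is then automatic, so this identifies (A*) as another valid description of $s(K)$. The proof breaks into three steps: descend from $\mathbb Z$ to all the residue fields $\z_p$, dualise via orthogonal complements, and translate the resulting covering property into the language of $N(K)$. The only nontrivial ingredient is a classical local--global fact used in the first step; everything else is bookkeeping.

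\emph{Step 1.} I would first recall that a finite family of integer vectors $w_1,\dots,w_N\in\mathbb Z^k$ generates $\mathbb Z^k$ as an abelian group if and only if for every prime $p$ the reductions $\bar w_1,\dots,\bar w_N$ span $\z_p^k$. Indeed, reduction of a generating set is generating; conversely, if the $w_i$ do not generate $\mathbb Z^k$ then $\mathbb Z^k/\langle w_1,\dots,w_N\rangle$ is a nonzero finitely generated abelian group, hence $(\mathbb Z^k/\langle w_1,\dots,w_N\rangle)\otimes\z_p=\z_p^k/\langle\bar w_1,\dots,\bar w_N\rangle$ is nonzero for a suitable prime $p$ (equivalently: the $k$-th determinantal divisor of the matrix with rows $w_i$, i.e.\ the gcd of its $k\times k$ minors, equals $1$ iff it is divisible by no prime, i.e.\ iff the rank modulo $p$ equals $k$ for every $p$). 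Applying this with $\{w_1,\dots,w_N\}=\{S^i\colon i\notin\sigma\}$ for each maximal simplex $\sigma$, condition (A) for $S$ becomes: for every maximal simplex $\sigma\in K$ and every prime $p$ the rows $\{S^i\bmod p\colon i\notin\sigma\}$ span $\z_p^k$.

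\emph{Step 2.} Over the field $\z_p$ a set of vectors spans $\z_p^k$ precisely when its orthogonal complement with respect to the standard dot product is trivial. Hence the rows $\{S^i\bmod p\colon i\notin\sigma\}$ span $\z_p^k$ iff there is no nonzero $\ib a\in\z_p^k$ with $\langle\ib a,S^i\rangle\equiv0\pmod p$ for all $i\notin\sigma$. So condition (A) for $S$ is equivalent to: for every prime $p$, every nonzero $\ib a\in\z_p^k$ and every maximal simplex $\sigma\in K$ there is an index $i\in[m]\setminus\sigma$ with $\langle\ib a,S^i\rangle\not\equiv0\pmod p$.

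\emph{Step 3.} Fix a prime $p$ and a nonzero $\ib a\in\z_p^k$ and set $\Omega(\ib a)=\{i\in[m]\colon\langle\ib a,S^i\rangle\not\equiv0\pmod p\}$. The clause of Step 2 attached to this $\ib a$ says exactly that $\Omega(\ib a)$ is not contained in any maximal simplex, i.e.\ $\Omega(\ib a)\notin K$. Since a subset of $[m]$ belongs to $K$ if and only if it contains no element of $N(K)$, the condition $\Omega(\ib a)\notin K$ is equivalent to the existence of some $\omega=\omega(\ib a)\in N(K)$ with $\omega\subseteq\Omega(\ib a)$, that is, with $\langle\ib a,S^i\rangle\not\equiv0\pmod p$ for all $i\in\omega(\ib a)$. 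Running this over all primes $p$ and all nonzero $\ib a\in\z_p^k$ reproduces precisely (A*), which completes the proof. The main obstacle, such as it is, is Step 1: one must know that generation of $\mathbb Z^k$ is not detectable at a single prime but is equivalent to the conjunction over all primes of spanning mod $p$, which is exactly what lets (A*) quantify over all fields $\z_p$ at once rather than over one coefficient field.
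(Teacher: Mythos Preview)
Your proof is correct and follows essentially the same line as the paper's: both hinge on the local--global criterion that integer vectors generate $\mathbb Z^k$ iff their reductions span $\z_p^k$ for every prime $p$ (the paper invokes this via the canonical/Smith form, you via the quotient-and-tensor argument), then dualise over each $\z_p$ and translate into the language of minimal non-simplices. The paper organises the argument as two proofs by contradiction, whereas you package it as a clean chain of equivalences through the auxiliary set $\Omega(\ib a)$, but the mathematical content is the same.
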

\begin{proof}
Let condition (A) hold but (A*) fail. Then there exists prime $p$ and
$\ib{a}\in\z_p^k\setminus \{0\}$ such that for any $\omega\in N(K)$ there is
$i_{\omega}$ with $\langle \ib{a}, S^{i_{\omega}}\rangle=0\mod p$. Set
$\sigma=[m]\setminus\{i_{\omega}\colon\omega\in N(K)\}$. Then $\sigma\ne
\varnothing$, since $\{S^i,i\in [m]\}$ span $\z^k$. Moreover, $\sigma\in K$.
Otherwise there is $\omega\in N(K)$ such that $\omega\subset\sigma$,
therefore $i_{\omega}\in\sigma$, which is a contradiction. Then all the rows
$\{S^i\colon i\notin\sigma\}$ lie in the proper subgroup $\{\ib{x}\colon
\langle\ib{a},\ib{x}\rangle=0\mod p\}\subset\z^k$. This contradicts to the
fact that they span $\z^k$.

Now let condition (A*) hold but (A) fail. Then for some $\sigma\in K$ the
rows $\{S^i\colon i\notin\sigma\}$ do not span $\z^k$. Therefore the matrix
$S^{\widehat\sigma}$ in the canonical form has nonnegative number $c\ne 1$ on
the diagonal. This means that there exists a primitive vector $\ib{b}$ in
$\z^k$ such that $\langle \ib{b}, S^i\rangle$ is either $0$ (if $c=0$), or is
divided by $c$ (if $c>0$) for all $i\notin\sigma$. Set $p=2$, if $c=0$; or
any prime divisor of $c$, if $c>0$. Set $\ib{a}=\ib{b}\mod p$. Then
$\ib{a}\ne 0$, and $\langle \ib{a},S^i\rangle=0\mod p$ for all $i\notin
\sigma$. On the other hand,
$\omega(\ib{a})\cap([m]\setminus\sigma)\ne\varnothing$, therefore for any
$i\in\omega(\ib{a})\setminus\sigma$ we have $\langle
\ib{a},S^i\rangle\ne0\mod p$, which is a contradiction.
\end{proof}

\begin{prop}\label{nonsp2} Condition (A2) is equivalent to the following condition (A2*): for
any nonzero vector $\ib{a}\in\mathbb \z_2^k$ there exists $\omega(\ib{a})\in
N(K)$ such that $\langle\ib{a},S^i\rangle=1$ in $\z_2$ for all
$i\in\omega(\ib{a})$.
\end{prop}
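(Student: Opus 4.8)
The plan is to follow the scheme of Proposition~\ref{nonsp}, which becomes noticeably shorter here because we work over the single field $\z_2$: a set of rows fails to span $\z_2^k$ precisely when it lies in a hyperplane $\{\ib{x}\colon\langle\ib{a},\ib{x}\rangle=0\}$ for some $\ib{a}\ne 0$, so there is no need to analyse the diagonal of a canonical form. A preliminary remark I would make first is that the condition in (A2) automatically propagates from maximal simplices to all simplices of $K$: if $\sigma\in K$ lies in a maximal simplex $\sigma'$, then the set of rows $\{S^i\colon i\notin\sigma\}$ contains $\{S^i\colon i\notin\sigma'\}$, so spanning of the smaller set forces spanning of the larger one. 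Thus (A2) is equivalent to saying that $\{S^i\colon i\notin\sigma\}$ spans $\z_2^k$ for every $\sigma\in K$.

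For the implication (A2)$\Rightarrow$(A2*), given $\ib{a}\in\z_2^k\setminus\{0\}$ I would set $\tau=\{i\in[m]\colon\langle\ib{a},S^i\rangle=1\}$. If $\tau\in K$, then by the reformulated (A2) the rows $\{S^i\colon i\notin\tau\}$ span $\z_2^k$, which is impossible since each such row satisfies $\langle\ib{a},S^i\rangle=0$, i.e.\ lies in the proper subspace of $\z_2^k$ orthogonal to $\ib{a}$. Hence $\tau\notin K$, so $\tau$ contains some $\omega\in N(K)$; and for this $\omega$ we have $\langle\ib{a},S^i\rangle=1$ for all $i\in\omega\subseteq\tau$, as required, so we may take $\omega(\ib{a})=\omega$.

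For (A2*)$\Rightarrow$(A2), let $\sigma\in K$ be a maximal simplex and suppose $\{S^i\colon i\notin\sigma\}$ does not span $\z_2^k$; pick $\ib{a}\ne 0$ with $\langle\ib{a},S^i\rangle=0$ for all $i\notin\sigma$. Applying (A2*) to $\ib{a}$ produces $\omega(\ib{a})\in N(K)$ with $\langle\ib{a},S^i\rangle=1$ for every $i\in\omega(\ib{a})$; no index of $\omega(\ib{a})$ can lie in $[m]\setminus\sigma$, so $\omega(\ib{a})\subseteq\sigma$, contradicting $\sigma\in K$ and $\omega(\ib{a})\notin K$. I do not anticipate a real obstacle here: the only points that need care are the reduction in the first paragraph (so that (A2*), which quantifies over all $\ib{a}$, can be matched against (A2), which a priori constrains only maximal simplices) and the use, already recorded in Proposition~\ref{sp2comb}, that over a field ``the columns of $S^{\widehat\sigma}$ are linearly independent'' and ``the rows $\{S^i\colon i\notin\sigma\}$ span $\z_2^k$'' express the same rank condition.
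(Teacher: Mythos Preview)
Your proof is correct and follows essentially the same route as the paper's: both directions hinge on the fact that over $\z_2$ a family of rows fails to span $\z_2^k$ exactly when it lies in a hyperplane $\langle\ib{a},\cdot\rangle=0$. The only cosmetic difference is that for (A2)$\Rightarrow$(A2*) you argue directly by forming $\tau=\{i:\langle\ib{a},S^i\rangle=1\}$ and showing it is a non-simplex, whereas the paper argues by contradiction, assuming each $\omega\in N(K)$ contains some $i_\omega$ with $\langle\ib{a},S^{i_\omega}\rangle=0$ and building the simplex $\sigma=[m]\setminus\{i_\omega\}$; your $\tau$ is simply the complement of the largest possible such $\sigma$, so the two arguments are the same in substance.
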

The proof is similar to the previous one. We give it here for the
completeness.
\begin{proof}
Let condition (A2) hold but (A2*) fail. Then there exists
$\ib{a}\in\z_2^k\setminus \{0\}$ such that for any $\omega\in N(K)$ there is
$i_{\omega}$ with $\langle \ib{a}, S^{i_{\omega}}\rangle=0$. Set
$\sigma=[m]\setminus\{i_{\omega}\colon\omega\in N(K)\}$. Then $\sigma\ne
\varnothing$ since $\{S^i,i\in [m]\}$ span $\z_2^k$. Moreover, $\sigma\in K$.
Otherwise there is $\omega\in N(K)$ such that $\omega\subset\sigma$,
therefore $i_{\omega}\in\sigma$, which is a contradiction. Then all the rows
$\{S^i\colon i\notin\sigma\}$ lie in the hyperplane $
\langle\ib{a},\ib{x}\rangle=0$. This contradicts to the fact that they span
$\z_2^k$.

Now let condition (A2*) hold and (A2) fail. Then for some $\sigma$ the rows
$\{S^i\colon i\notin\sigma\}$ do not span $\z_2^k$. Then they lie in some
hyperplane $\langle\ib{a},\ib{x}\rangle=0$. On the other hand,
$\omega(\ib{a})\cap([m]\setminus\sigma)\ne\varnothing$, therefore for any
$i\in\omega(\ib{a})\setminus\sigma$ we have $\langle \ib{a},S^i\rangle=1$,
which is a contradiction.
\end{proof}
Let us call the linear dependence $\ib{a}_1+\dots+\ib{a}_l=0$ in the vector
space $\z_2^k$ minimal if any proper subset of vectors in
$\{\ib{a}_1,\dots,\ib{a}_l\}$ is linearly independent.
\begin{prop}\label{utv:xi}
We have $\s(K)\geqslant k$ if and only if there exists a mapping
$\xi\colon\z_2^k\setminus\{0\}\to N(K)$ such that
$\xi(\ib{a}_1)\cap\dots\cap\xi(\ib{a}_{2r+1})=\varnothing $ for any minimal
linear dependence $\ib{a}_1+\dots+\ib{a}_{2r+1}=0$.
\end{prop}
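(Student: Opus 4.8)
The plan is to translate everything through the combinatorial descriptions (A2) and (A2*). By Proposition~\ref{sp2comb} together with Proposition~\ref{nonsp2}, the inequality $\s(K)\geqslant k$ holds if and only if there is a matrix $S\in\mat_{m\times k}(\z_2)$ and a map $\omega\colon\z_2^k\setminus\{0\}\to N(K)$ with $\langle\ib{a},S^i\rangle=1$ in $\z_2$ for every $\ib{a}\ne 0$ and every $i\in\omega(\ib{a})$. So the task reduces to converting such pairs $(S,\omega)$ into maps $\xi$ as in the statement, and back.

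\emph{The ``only if'' part.} Given $S$ and $\omega$ as above, I would simply take $\xi=\omega$. Suppose $\ib{a}_1+\dots+\ib{a}_{2r+1}=0$ is a minimal linear dependence in $\z_2^k$ and some $i\in[m]$ lies in $\xi(\ib{a}_1)\cap\dots\cap\xi(\ib{a}_{2r+1})$. Then $\langle\ib{a}_j,S^i\rangle=1$ for each $j$, and summing these $2r+1$ equalities over $\z_2$ gives $0=\langle\ib{a}_1+\dots+\ib{a}_{2r+1},S^i\rangle=2r+1=1$ in $\z_2$, a contradiction. Hence $\xi(\ib{a}_1)\cap\dots\cap\xi(\ib{a}_{2r+1})=\varnothing$. (Only the oddness of the number of summands is used here, not minimality.)

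\emph{The ``if'' part.} Given $\xi$, I would build $S$ row by row. For a vertex $i\in[m]$ put $A_i=\{\ib{a}\in\z_2^k\setminus\{0\}\colon i\in\xi(\ib{a})\}$. If each row $S^i$ can be chosen so that $\langle\ib{a},S^i\rangle=1$ for all $\ib{a}\in A_i$, then the resulting matrix $S$ satisfies (A2*) with $\omega=\xi$, and $\s(K)\geqslant k$ follows from Proposition~\ref{nonsp2}. By elementary linear algebra over $\z_2$ (solvability of an affine system $Mx=\mathbf 1$ is equivalent to $\mathbf 1$ being orthogonal to the kernel of $M^{\top}$), such an $S^i$ exists if and only if every subset $B\subseteq A_i$ with $\sum_{\ib{a}\in B}\ib{a}=0$ has even cardinality.

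So the real content, and the step I expect to need the most care, is deriving this parity property from the hypothesis on $\xi$. The hypothesis says precisely that for every $i$ the set $A_i$ contains no minimal linear dependence of odd length, since $\ib{a}_1,\dots,\ib{a}_{2r+1}\in A_i$ is the same as $i\in\xi(\ib{a}_1)\cap\dots\cap\xi(\ib{a}_{2r+1})$. I would strengthen this to ``$A_i$ contains no linear dependence of odd length at all'' by a minimal-counterexample argument: take a smallest odd-cardinality $B\subseteq A_i$ with $\sum_{\ib{a}\in B}\ib{a}=0$; if $B$ is not a minimal dependence, choose a nonempty proper $B''\subsetneq B$ with $\sum_{\ib{a}\in B''}\ib{a}=0$, noting $|B''|\geqslant 2$ since all vectors are nonzero. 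If $|B''|$ is odd this contradicts minimality of $B$; if $|B''|$ is even, then $B\setminus B''$ is an odd dependence in $A_i$ strictly smaller than $B$, again a contradiction. Hence $B$ itself would be a minimal odd dependence inside $A_i$, contradicting the hypothesis; so no odd dependence exists, the rows $S^i$ can be chosen, and the construction goes through.
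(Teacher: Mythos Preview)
Your proof is correct and follows essentially the same approach as the paper. For the ``only if'' direction both you and the paper set $\xi=\omega$ and use the parity contradiction; for the ``if'' direction both define the set $A_i$ (the paper calls it $M_i$) and show the affine system $\langle\ib{a},\ib{x}\rangle=1$, $\ib{a}\in A_i$, is consistent. The only minor stylistic difference is in justifying consistency: the paper picks a maximal linearly independent subset of $M_i$ and observes directly that every $\ib{a}\in M_i$ is an odd sum of these basis vectors (so its equation is implied), whereas you invoke the general solvability criterion and then use a minimal-counterexample argument to upgrade ``no odd minimal dependence'' to ``no odd dependence.'' Both arguments are equivalent and equally short.
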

\begin{proof}
Let $\s(K)\geqslant k$. Set $\xi(\ib{a})=\omega(\ib{a})$. Let
$i\in\xi(\ib{a}_1)\cap\dots\cap\xi(\ib{a}_{2r+1})$. Then $\langle
\ib{a}_j,S^i\rangle=1$ for all $j=1,\dots,2r+1$. Hence, $\langle
\ib{a}_1+\dots+\ib{a}_{2r+1}, S^i\rangle=1$, therefore
$\ib{a}_1+\dots+\ib{a}_{2r+1}\ne0$.

Now let us prove the "if" part.  For any $i\in [m]$ set
$M_i=\{\ib{a}\colon\xi(\ib{a})\ni i\}$. Consider the system of equations
$\{\langle \ib{a},\ib{x}\rangle=1\colon\ib{a}\in M_i\}$. Let
$\ib{a}_1,\dots,\ib{a}_t$ be a maximal linearly independent subset in $M_i$.
Since any minimal dependence in $M_i$ contains even number of vectors, we
obtain $\ib{a}=\ib{a}_{i_1}+\dots+\ib{a}_{i_{2l+1}}$ for any $\ib{a}\in M_i$.
Therefore all the equations are expressed in terms of basic equations, hence
the system has solutions. Let $S^i$ be some of them. Consider the matrix $S$
consisting of rows $S^i$. From construction we have $\langle
\ib{a},S^i\rangle=1$ for any $i\in\xi(\ib{a})$, therefore $\s(K)\geqslant k$.
\end{proof}

\section{Criteria for $s(K)\geqslant 1,2,3$}
Lemma \ref{lem:sr=s} implies that it is enough to consider $\s(K)$. The
following proposition easily follows from proposition \ref{sp2comb} or
proposition \ref{utv:xi}.
\begin{prop}[Condition (S1)]\label{utv:sp>=1}
We have $\s(K)\geqslant 1$ if and only if $N(K)\ne\varnothing$, i.e.
$K\ne\Delta^n$.
\end{prop}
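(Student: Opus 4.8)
The plan is to derive this as the degenerate base case $k=1$ of the general machinery already set up, and the cleanest route is through Proposition \ref{utv:xi}. Specializing that criterion to $k=1$, the domain $\z_2^1\setminus\{0\}$ consists of the single vector $1$, so a candidate map $\xi$ is nothing but a choice of one element $\xi(1)\in N(K)$. The side condition concerns minimal linear dependences $\ib{a}_1+\dots+\ib{a}_{2r+1}=0$ with an odd number of nonzero summands; but in $\z_2^1$ every nonzero summand equals $1$, and the sum of an odd number of $1$'s equals $1\ne 0$, so no such dependence exists. The intersection requirement is therefore vacuous, and Proposition \ref{utv:xi} collapses to the bare existence of a map $\{1\}\to N(K)$. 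Such a map exists precisely when $N(K)\ne\varnothing$, which gives the stated equivalence.

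For completeness I would also record the dual, more hands-on argument via Proposition \ref{sp2comb}(A2). There $\s(K)\geqslant 1$ asks for a column $S\in\mat_{m\times 1}(\z_2)$ such that for every maximal simplex $\sigma$ the rows $\{S^i\colon i\notin\sigma\}$ span $\z_2$, i.e. at least one of them equals $1$. For the implication $N(K)\ne\varnothing\Rightarrow\s(K)\geqslant 1$, I would simply take $S$ to be the all-ones column. This $S$ satisfies the condition exactly when no maximal simplex is all of $[m]$; and a maximal simplex equal to $[m]$ would mean $[m]\in K$, forcing $K=\Delta^{m-1}$ and hence $N(K)=\varnothing$. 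Thus when $N(K)\ne\varnothing$ the all-ones column works.

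Conversely, if $N(K)=\varnothing$ then $K$ is the full simplex on $[m]$, whose unique maximal simplex is $[m]$ itself; the required rows $\{S^i\colon i\notin[m]\}$ then form the empty collection, which spans only the zero group and cannot span $\z_2$. Hence no admissible $S$ exists and $\s(K)=0$, establishing $\s(K)\geqslant 1\Rightarrow N(K)\ne\varnothing$.

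The point I want to flag is that there is no genuine obstacle here: the statement is the trivial initial case of the criteria, and the only thing to handle with care is the degenerate bookkeeping — namely that the empty set of rows spans only the zero group, and that $K=\Delta^{n}$ is exactly the condition $[m]\in K$ (equivalently $N(K)=\varnothing$) under the convention that $K$ is a complex on all $m$ vertices.
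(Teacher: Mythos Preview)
Your proposal is correct and follows exactly the routes the paper indicates: the paper simply states that the proposition ``easily follows from proposition \ref{sp2comb} or proposition \ref{utv:xi}'' without further elaboration, and you have written out both of these arguments in detail. There is nothing to add.
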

\begin{prop}[Condition (S2)]\label{utv:sp>=2}
We have $\s(K)\geqslant 2$ if and only if $N(K)$ contains one of the subsets
of the form:
\begin{enumerate}
\item $\{\tau_1,\tau_2,\tau_3\}$:\quad
    $\tau_1\cap\tau_2\cap\tau_3=\varnothing$;
\item $\{\tau_1,\tau_2\}$:\quad $\tau_1\cap\tau_2=\varnothing$.
\end{enumerate}
\end{prop}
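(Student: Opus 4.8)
The plan is to use Proposition~\ref{utv:xi} with $k=2$ and analyze all possible minimal linear dependences in $\z_2^2$. The nonzero vectors are $\e_1=(1,0)$, $\e_2=(0,1)$, and $\e_1+\e_2=(1,1)$, and the only minimal linear dependence among them is the single three-term relation $\e_1+\e_2+(\e_1+\e_2)=0$ (the two-term relations $\a+\a=0$ are not dependences of distinct vectors, and there is no dependence of two distinct nonzero vectors in $\z_2^2$). So by Proposition~\ref{utv:xi}, $\s(K)\geqslant 2$ if and only if there is a map $\xi$ from $\{\e_1,\e_2,\e_1+\e_2\}$ to $N(K)$ with $\xi(\e_1)\cap\xi(\e_2)\cap\xi(\e_1+\e_2)=\varnothing$. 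Setting $\tau_i$ to be the images, this says exactly that $N(K)$ contains elements $\tau_1,\tau_2,\tau_3$ (not necessarily distinct) with empty triple intersection.

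The next step is to reconcile "not necessarily distinct" with the clean statement of the proposition. If all three $\tau_i$ coincide, the triple intersection equals $\tau_1\ne\varnothing$, which is impossible; hence at most two of them coincide. If exactly two coincide, say $\tau_1=\tau_2\ne\tau_3$, the condition becomes $\tau_1\cap\tau_3=\varnothing$, giving case~(2) with the two distinct sets $\tau_1,\tau_3$. If all three are distinct, we get case~(1) with $\tau_1\cap\tau_2\cap\tau_3=\varnothing$ and pairwise distinct $\tau_i$. Conversely, given a configuration of type~(1) or~(2) in $N(K)$, define $\xi$ by sending $\e_1,\e_2,\e_1+\e_2$ to $\tau_1,\tau_2,\tau_3$ in case~(1), or to $\tau_1,\tau_1,\tau_2$ in case~(2) (any other coordinate value of $\xi$ being irrelevant since there are no further minimal dependences); in both cases the unique minimal dependence has empty intersection of $\xi$-images, so Proposition~\ref{utv:xi} yields $\s(K)\geqslant 2$.

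Alternatively, one can argue directly from Proposition~\ref{sp2comb}(A2*): a rank-$2$ matrix $S\in\mat_{m\times 2}(\z_2)$ with the required property exists iff for each of the three nonzero $\a\in\z_2^2$ there is $\omega(\a)\in N(K)$ with $\langle\a,S^i\rangle=1$ for all $i\in\omega(\a)$; since $S$ has rank $2$, each row $S^i$ is one of $(0,0),(1,0),(0,1),(1,1)$, and for each $\a$ the rows $S^i$ on which $\langle\a,S^i\rangle=1$ are exactly those equal to one of the two vectors not orthogonal to $\a$. The constraint "$\langle\a,S^i\rangle=1$ for all $i\in\omega(\a)$" for all three $\a$ simultaneously is satisfiable iff one can choose the rows so that $\omega(\e_1),\omega(\e_2),\omega(\e_1+\e_2)$ avoid the common "forbidden" row value, which unwinds to the same combinatorial condition on $N(K)$. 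I expect the main (minor) obstacle to be bookkeeping: being careful that the $\tau_i$ produced by Proposition~\ref{utv:xi} need not be distinct, and checking that the degenerate coincidence patterns correspond precisely to the two listed cases — there is no real analytic difficulty, only a small case check that the unique minimal dependence in $\z_2^2$ is the three-term one.
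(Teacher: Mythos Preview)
Your proof is correct and follows essentially the same route as the paper: you invoke Proposition~\ref{utv:xi} for $k=2$, identify the unique odd minimal dependence $(1,0)+(0,1)+(1,1)=0$, and then split into the cases where $\xi$ is injective (giving case~1) or has exactly two equal values (giving case~2), with the all-equal case excluded since elements of $N(K)$ are nonempty. The only minor slip is a reference: condition (A2*) is in Proposition~\ref{nonsp2}, not Proposition~\ref{sp2comb}.
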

\begin{proof}
Proposition \ref{utv:xi} implies that condition $\s(K)\geqslant 2$ is
equivalent to the existence of the mapping $\xi\colon\z_2^2\setminus\{0\}\to
N(K)$ with $\xi(1,0)\cap\xi(0,1)\cap\xi(1,1)=\varnothing$. There are two
possibilities:
\begin{enumerate}
\item The mapping is injective. Set $\tau_1=\xi(1,0)$, $\tau_1=\xi(0,1)$,
    $\tau_3=\xi(1,1)$.
\item Exactly two vectors have the same images. Take them as a basis. Set
    $\tau_1=\xi(1,0)=\xi(0,1)$, and $\tau_2=\xi(1,1)$.
\end{enumerate}
This proves the "only if" part and gives the mappings for the "if" part.
\end{proof}

\begin{prop}[Condition (S3)]\label{utv:sp>=3}
We have $\s(K)\geqslant 3$ if and only if $N(K)$ contains one of the subsets
of the form:
\begin{enumerate}
\item $\{\tau_1,\tau_2,\tau_3,\tau_4,\tau_5,\tau_6,\tau_7\}$:\quad
    $\tau_1\cap\tau_2\cap\tau_4=\varnothing$;\quad$\tau_1\cap\tau_3\cap\tau_5=\varnothing$;\quad
    $\tau_1\cap\tau_6\cap\tau_7=\varnothing$;
$$
\tau_2\cap\tau_3\cap\tau_6=\varnothing;\quad\tau_2\cap\tau_5\cap\tau_7=\varnothing;\quad\tau_3\cap\tau_4\cap\tau_7=\varnothing;\quad\tau_4\cap\tau_5\cap\tau_6=\varnothing;
$$
\item $\{\tau_1,\tau_2,\tau_3,\tau_4,\tau_5,\tau_6\}$:\quad
    $\tau_1\cap\tau_3=\varnothing$;\quad$\tau_1\cap\tau_2\cap\tau_4=\varnothing$;\quad
    $\tau_1\cap\tau_2\cap\tau_5=\varnothing$;
$$
\tau_1\cap\tau_4\cap\tau_6=\varnothing;\quad\tau_1\cap\tau_5\cap\tau_6=\varnothing;\quad\tau_2\cap\tau_3\cap\tau_6=\varnothing;\quad\tau_3\cap\tau_4\cap\tau_5=\varnothing;
$$
\item $\{\tau_1,\tau_2,\tau_3,\tau_4,\tau_5\}$:\quad
    $\tau_1\cap\tau_2=\varnothing$;\quad$\tau_1\cap\tau_5=\varnothing;$\quad
    $\tau_1\cap\tau_3\cap\tau_4=\varnothing$;
$$
\tau_2\cap\tau_3\cap\tau_5=\varnothing;\quad\tau_2\cap\tau_4\cap\tau_5=\varnothing;
$$
\item $\{\tau_1,\tau_2,\tau_3,\tau_4\}$:\quad $\tau_1\cap
    (\tau_2\cup\tau_3\cup\tau_4)=\varnothing;\quad\tau_2\cap\tau_3\cap\tau_4=\varnothing$;
\item $\{\tau_1,\tau_2,\tau_3\}$:\quad
    $\tau_1\cap\tau_2=\tau_1\cap\tau_3=\tau_2\cap\tau_3=\varnothing$.
\end{enumerate}
\end{prop}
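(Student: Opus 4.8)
The plan is to mimic the structure of the proof of Proposition~\ref{utv:sp>=2}, using Proposition~\ref{utv:xi} as the engine: $\s(K)\geqslant 3$ is equivalent to the existence of a map $\xi\colon\z_2^3\setminus\{0\}\to N(K)$ such that for every minimal linear dependence $\ib{a}_1+\ib{a}_2+\ib{a}_3=0$ in $\z_2^3$ one has $\xi(\ib{a}_1)\cap\xi(\ib{a}_2)\cap\xi(\ib{a}_3)=\varnothing$. The seven nonzero vectors of $\z_2^3$ are the points of the Fano plane, and the minimal dependences are exactly the seven lines (triples summing to zero) together with all pairs $\ib{a},\ib{a}'$ with $\ib{a}=\ib{a}'$ — but since $\xi$ is a function this pairwise case is vacuous, so the constraint is precisely: for each of the $7$ Fano lines $\{\ib{a},\ib{a}',\ib{a}''\}$, the three sets $\xi(\ib{a}),\xi(\ib{a}'),\xi(\ib{a}'')$ have empty common intersection. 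So the real content is: $\s(K)\geqslant 3$ iff there is a labelling of the $7$ points of the Fano plane by elements of $N(K)$ (not necessarily distinct) such that on every line the three labels intersect to $\varnothing$.

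Next I would reduce this to a finite case analysis on the fibers of $\xi$, i.e.\ on the partition of the $7$ Fano points according to which element of $N(K)$ they are sent to. The five items in the proposition should correspond exactly to the possible ``collapse patterns'' of $\xi$ measured by how many distinct non-simplices are used and how the Fano points group together. I would enumerate: (5) $\xi$ injective — $7$ distinct $\tau_i$; but in fact if $\xi$ is injective the Fano structure forces the seven displayed triple conditions of item~(1), so item~(1) is the injective case, not item~(5). Then I would work down: (2) should be the case where exactly one pair of Fano points is identified (so $6$ distinct values); (3) where two disjoint pairs are identified ($5$ values); (4) where a line's worth of three points collapses to one value, or a triple collapses together ($4$ values); (5) where the image has size $3$. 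For each pattern I would: (i) check that the Fano line conditions, after substituting equal labels, reduce exactly to the intersection conditions listed in the corresponding item — noting that a condition $\tau\cap\tau=\varnothing$ degenerates to $\tau=\varnothing$, which is impossible since $\tau\in N(K)$, hence such a pattern is forbidden and need not appear in the list; and (ii) conversely, given a family in $N(K)$ satisfying the item's conditions, define $\xi$ by the indicated assignment of $\tau$'s to Fano points and verify all $7$ line conditions hold. This two-way check is the bulk of the work, but it is bookkeeping: each item is a short, explicit verification against the Fano incidence table.

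The combinatorial heart is the following. Fix a bijection between $\z_2^3\setminus\{0\}$ and the Fano points, say $(1,0,0),(0,1,0),(0,0,1),(1,1,0),(1,0,1),(0,1,1),(1,1,1)$ with the seven lines being the seven zero-sum triples. Then classify all set partitions of these $7$ points into the nonempty fibers of $\xi$; up to the automorphism group $\mathrm{PGL}_3(\z_2)$ of the Fano plane (order $168$) there are only a handful of partition types that avoid a line having two points in the same block with the third block also forced — more precisely, we must avoid any pattern that forces some line condition to become $\tau\cap\tau\cap\tau'=\varnothing$ or $\tau\cap\tau=\varnothing$, i.e.\ a repeated block on a line, \emph{unless} that collapse is absorbed. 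I expect that after modding out by $\mathrm{PGL}_3(\z_2)$ the admissible collapse patterns are exactly five, matching the five items, with the displayed intersection conditions being the line conditions that survive. The main obstacle, then, is purely enumerative: making sure the list of five patterns is complete (no admissible collapse type is missed) and that the automorphism group is used correctly so that each listed item is the ``canonical representative'' of its type — in particular double-checking that a pattern with a whole Fano line collapsing to one block is admissible (it is: a line condition on that line becomes $\tau\cap\tau\cap\tau=\tau=\varnothing$, impossible — so such patterns are actually \emph{forbidden}, which tells me item~(4)'s collapse is of the ``three concurrent-but-noncollinear points'' type or a smaller collapse, and I must get this distinction exactly right). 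Once the pattern list is pinned down, writing out the ``if'' and ``only if'' directions for each item is routine substitution into the Fano incidence relations, exactly as in the proof of Proposition~\ref{utv:sp>=2}.
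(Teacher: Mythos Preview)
Your approach is exactly the one the paper takes: invoke Proposition~\ref{utv:xi}, identify $\z_2^3\setminus\{0\}$ with the Fano plane, note that the only odd minimal dependences are the seven lines, and then enumerate the possible fibre partitions of $\xi$ up to a change of basis (equivalently, up to $\mathrm{PGL}_3(\z_2)$). The paper organises this enumeration by first asking whether some three vectors share an image and then how many pairs coincide, which is the same data you propose to track.

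One point in your plan needs correcting. You expect that ``the admissible collapse patterns are exactly five'', one per item. That is not how it works: the paper's enumeration produces about a dozen admissible patterns (e.g.\ the two non-equivalent ways of collapsing two disjoint pairs, three different three-pair configurations, several configurations with a non-collinear triple collapsed, etc.). Several distinct patterns yield the \emph{same} item, often by discarding some of the images. For instance a non-collinear triple with four singletons (five distinct values) already forces $\tau_1\cap(\tau_2\cup\tau_3\cup\tau_4)=\varnothing$ and $\tau_2\cap\tau_3\cap\tau_4=\varnothing$, which is item~(4), while two collapsed pairs (also five values) give item~(3); likewise item~(5) arises from patterns with three \emph{or} four distinct values. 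So the ``only if'' direction is not a bijection between orbit types and items but a reduction: every admissible pattern implies one of the five displayed systems of conditions. For the ``if'' direction you then only need one explicit $\xi$ per item, and the paper picks the simplest pattern in each case.

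Your observation that a fibre cannot contain an entire line is correct and is used implicitly throughout; your worry about ``three concurrent-but-noncollinear points'' is exactly the right distinction (the paper phrases it as ``these vectors are linearly independent''). Once you actually carry out the enumeration with this in mind, the proof goes through as you sketched.
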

\begin{proof}
Proposition \ref{utv:xi} implies that condition $\s(K)\geqslant 3$ is
equivalent to the existence of the mapping $\xi\colon\z_2^3\setminus\{0\}\to
N(K)$ such that $\xi(\ib{a})\cap\xi(\ib{b})\cap\xi(\ib{c})=\varnothing$ for
any triple of pairwise distinct vectors $\ib{a},\ib{b},\ib{c}$ with
$\ib{a}+\ib{b}+\ib{c}=0$. There are exactly $7$ such triples and they
correspond to two-dimensional subspaces:
\begin{gather*}
(1,0,0)+(0,1,0)+(1,1,0)=(1,0,0)+(0,0,1)+(1,0,1)=(1,0,0)+(0,1,1)+(1,1,1)=\\
(0,1,0)+(0,0,1)+(0,1,1)=(0,1,0)+(1,0,1)+(1,1,1)=(0,0,1)+(1,1,0)+(1,1,1)=\\
(1,1,0)+(1,0,1)+(0,1,1)=0.
\end{gather*}
Set $\ib{a}_1=(1,0,0)$, $\ib{a}_2=(0,1,0)$, $\ib{a}_3=(0,0,1)$,
$\ib{a}_4=(1,1,0)$, $\ib{a}_5=(1,0,1)$, $\ib{a}_6=(0,1,1)$,
$\ib{a}_7=(1,1,1)$. Then
\begin{gather*}
\a_1+\a_2+\a_4=0;\quad\a_1+\a_3+\a_5=0;\quad\a_1+\a_6+\a_7=0;\quad\a_2+\a_3+\a_6=0;\\
\a_2+\a_5+\a_7=0;\quad\a_3+\a_4+\a_7=0;\quad\a_4+\a_5+\a_6=0.
\end{gather*}
Now the proof is obtained by enumeration of all the possible cases. In each
case we choose a basis $\e_1,\e_2,\e_3$ in $\z_2^3$ and denote the sets in
the image by $\o_\sigma$, where $\sigma=\{i\colon\xi(\a_i)=\omega_\sigma\}$.
We use the fact that no four vectors can have the same image. Redundant
equalities are enclosed in brackets.

{\bf I.} There are no triples of vectors with the same image. Then there are
at most three pairs of vectors with the same images.
\begin{enumerate}
\item There are no pairs. Then $\Im\xi=\{\o_i\colon i=1,\dots,7\}$, and
\begin{gather*}
\o_1\cap\o_2\cap\o_4=\varnothing;\quad\o_1\cap\o_3\cap\o_5=\varnothing;\quad\o_1\cap\o_6\cap\o_7=\varnothing;\quad\o_2\cap\o_3\cap\o_6=\varnothing;\\
\o_2\cap\o_5\cap\o_7=\varnothing;\quad\o_3\cap\o_4\cap\o_7=\varnothing;\quad\o_4\cap\o_5\cap\o_6=\varnothing.
\end{gather*}
Set $\tau_i=\omega_i$.

\item There is exactly one pair. Choose $\e_1$ and $\e_2$ to be the
    vectors of this pair, and $\e_3\notin\L\{\e_1,\e_2\}$. Then
    $\Im\xi=\{\o_{12},\o_3,\o_4,\o_5,\o_6,\o_7\}$, and
\begin{gather*}
\o_{12}\cap\o_4=\varnothing;\quad\o_{12}\cap\o_3\cap\o_5=\varnothing;\quad\o_{12}\cap\o_6\cap\o_7=\varnothing;\quad\o_{12}\cap\o_3\cap\o_6=\varnothing;\\
\o_{12}\cap\o_5\cap\o_7=\varnothing;\quad\o_3\cap\o_4\cap\o_7=\varnothing;\quad\o_4\cap\o_5\cap\o_6=\varnothing.
\end{gather*}
Set $\tau_1=\omega_{12}$, and $\tau_i=\o_{i+1}$ for $i\geqslant 2$.
\item There are exactly two pairs.
\begin{enumerate}
\item One pair contains the sum of the vectors of the other. Choose
    $\e_1$ and $\e_2$ to be the vectors of the second pair, and
    $\e_3$ to be the vector paired to $\e_1+\e_2$. Then
    $\Im\xi=\{\o_{12},\o_{34},\o_5,\o_6,\o_7\}$, and
\begin{gather*}
\o_{12}\cap\o_{34}=\varnothing;\quad \langle\o_{12}\cap\o_{34}\cap\o_5=\varnothing\rangle;\quad\o_{12}\cap\o_6\cap\o_7=\varnothing;\quad\langle\o_{12}\cap\o_{34}\cap\o_6=\varnothing\rangle;\\
\o_{12}\cap\o_5\cap\o_7=\varnothing;\quad\o_{34}\cap\o_7=\varnothing;\quad\o_{34}\cap\o_5\cap\o_6=\varnothing.
\end{gather*}
Set $\tau_1=\o_{34}$, $\tau_2=\o_5$, $\tau_3=\o_6$, $\tau_4=\o_7$,
    $\tau_5=\o_{12}$.

\item No pair contains the sum of the vectors of the other. Choose
    $\e_1$ and $\e_2$ to be the vectors of the first pair, and $\e_3$
    to be any vector of the second. The only possible case is:
    $\xi(0,0,1)=\xi(1,1,1)$. Then
    $\Im\xi=\{\o_{12},\o_{37},\o_4,\o_5,\o_6\}$, and
\begin{gather*}
\o_{12}\cap\o_4=\varnothing;\quad\o_{12}\cap\o_{37}\cap\o_5=\varnothing;\quad\o_{12}\cap\o_6\cap\o_{37}=\varnothing;\quad\langle\o_{12}\cap\o_{37}\cap\o_6=\varnothing\rangle;\\
\langle\o_{12}\cap\o_5\cap\o_{37}=\varnothing\rangle;\quad\o_{37}\cap\o_4=\varnothing;\quad\o_4\cap\o_5\cap\o_6=\varnothing.
\end{gather*}
Set $\tau_1=\o_4$, $\tau_2=\o_{12}$, $\tau_3=\o_5$, $\tau_4=\o_6$,
$\tau_5=\o_{37}$.
\end{enumerate}
\item There are exactly three pairs.
\begin{enumerate}
\item The seventh vector is not equal to the sum of the vectors of
    any pair. Choose $\e_1$ and $\e_2$ to be the vectors of any pair,
    and $\e_3$ to be the vector paired to $\e_1+\e_2$. We have
    $\xi(0,0,1)=\xi(1,1,0)$. Then the vector
    $(1,1,1)=(0,0,1)+(1,1,0)$ belongs to the third pair. The
    remaining vector of it's pair up to the transposition of $\e_1$
    and $\e_2$ is $(1,0,1)$: $\xi(1,0,1)=\xi(1,1,1)$. Then $\Im
    \xi=\{\o_{12},\o_{34},\o_{57},\o_6\}$, and
\begin{gather*}
\o_{12}\cap\o_{34}=\varnothing;\quad\langle\o_{12}\cap\o_{34}\cap\o_{57}=\varnothing\rangle;\quad\langle\o_{12}\cap\o_6\cap\o_{57}=\varnothing\rangle;\quad\langle\o_{12}\cap\o_{34}\cap\o_6=\varnothing\rangle;\\
\o_{12}\cap\o_{57}=\varnothing;\quad\o_{34}\cap\o_{57}=\varnothing;\quad\langle\o_{34}\cap\o_{57}\cap\o_6=\varnothing\rangle.
\end{gather*}
Set $\tau_1=\o_{12}$, $\tau_2=\o_{34}$, $\tau_3=\o_{57}$.

\item The seventh vector is the sum of two vectors of exactly one
    pair. Choose $\e_1$ and $\e_2$ to be the vectors of this pair,
    and $\ib{e}_3$ to be any of the vectors of the remaining two
    pairs. The vector paired to $\e_3$ can not be $(1,1,1)$,
    therefore up to a transposition of $\e_1$ and $\e_2$ we obtain:
    $\xi(0,0,1)=\xi(1,0,1)$, $\xi(0,1,1)=\xi(1,1,1)$. Then
    $\Im\xi=\{\o_{12},\o_{35},\o_4,\o_{67}\}$, and
\begin{gather*}
\o_{12}\cap\o_4=\varnothing;\quad\o_{12}\cap\o_{35}=\varnothing;\quad\o_{12}\cap\o_{67}=\varnothing;\quad\langle\o_{12}\cap\o_{35}\cap\o_{67}=\varnothing\rangle;\\
\langle\o_{12}\cap\o_{35}\cap\o_{67}=\varnothing\rangle;\quad\o_{35}\cap\o_4\cap\o_{67}=\varnothing;\quad\langle\o_4\cap\o_{35}\cap\o_{67}=\varnothing\rangle.
\end{gather*}
Set $\tau_1=\o_{12}$, $\tau_2=\o_{35}$, $\tau_3=\o_4$,
$\tau_4=\o_{67}$.

\item The seventh vector is the sum of vectors of at least two pairs.
    Choose $\e_1$ and $\e_2$ to be the vectors of the first pair, and
    $\ib{e}_3$ to be any of the vectors of the second. For the second
    pair we obtain: $\xi(0,0,1)=\xi(1,1,1)$, and for the third pair:
    $\xi(1,0,1)=\xi(0,1,1)$. Then
    $\Im\xi=\{\o_{12},\o_{37},\o_4,\o_{56}\}$, and
\begin{gather*}
\o_{12}\cap\o_4=\varnothing;\quad\o_{12}\cap\o_{37}\cap\o_{56}=\varnothing;\quad\langle\o_{12}\cap\o_{56}\cap\o_{37}=\varnothing\rangle;\quad\langle\o_{12}\cap\o_{37}\cap\o_{56}=\varnothing\rangle;\\
\langle\o_{12}\cap\o_{56}\cap\o_{37}=\varnothing\rangle;\quad\o_{37}\cap\o_4=\varnothing;\quad\o_4\cap\o_{56}=\varnothing.
\end{gather*}
Set $\tau_1=\o_4$, $\tau_2=\o_{12}$, $\tau_3=\o_{37}$,
$\tau_4=\o_{56}$.
\end{enumerate}
\end{enumerate}

{\bf II.} There is a triple of vectors with the same image. These vectors are
linearly independent and we can choose them as $\e_1,\e_2$, and $\e_3$.
Consider the rest four vectors.
\begin{enumerate}
\item There are no triples of vectors with the same image.
\begin{enumerate}
\item There are no pairs of vectors with the same image. Then
    $\Im\xi=\{\omega_{123},\omega_4,\o_5,\o_6,\o_7\}$, and
    \begin{gather*}
    \o_{123}\cap\o_4=\varnothing;\quad\o_{123}\cap\o_5=\varnothing;\quad\langle\o_{123}\cap\o_6\cap\o_7=\varnothing\rangle;\quad\o_{123}\cap\o_6=\varnothing;\\
    \langle\o_{123}\cap\o_5\cap\o_7=\varnothing\rangle;\quad\langle\o_{123}\cap\o_4\cap\o_7=\varnothing\rangle;\quad\o_4\cap\o_5\cap\o_6=\varnothing.
    \end{gather*}
    Set $\tau_1=\o_{123}$, $\tau_2=\o_4$, $\tau_3=\o_5$,
    $\tau_4=\o_6$.
\item There is exactly one pair of vectors with the same image.
    \begin{enumerate}
    \item One of the vectors of the pair is $\e_1+\e_2+\e_3$. Up
        to a transposition of $\e_1,\e_2$, and $\e_3$ we obtain:
        $\xi(1,1,0)=\xi(1,1,1)$. Then
        $\Im\xi=\{\o_{123},\o_{47},\o_5,\o_6\}$, and
    \begin{gather*}
    \o_{123}\cap\o_{47}=\varnothing;\quad\o_{123}\cap\o_5=\varnothing;\quad\langle\o_{123}\cap\o_6\cap\o_{47}=\varnothing\rangle;\quad\o_{123}\cap\o_6=\varnothing;\\
    \langle\o_{123}\cap\o_5\cap\o_{47}=\varnothing\rangle;\quad\langle\o_{123}\cap\o_{47}=\varnothing\rangle;\quad\o_{47}\cap\o_5\cap\o_6=\varnothing.
    \end{gather*}
    Set $\tau_1=\o_{123}$, $\tau_2=\o_{47}$, $\tau_3=\o_5$,
    $\tau_4=\o_6$.

    \item Both vectors of the pair are sums of two basis vectors.
        Up to a transposition of $\e_1,\e_2$, and $\e_3$ we
        obtain: $\xi(1,1,0)=\xi(1,0,1)$. Then
        $\Im\xi=\{\o_{123},\o_{45},\o_6,\o_7\}$, and
        \begin{gather*}
        \o_{123}\cap\o_{45}=\varnothing;\quad\langle\o_{123}\cap\o_{45}=\varnothing\rangle;\quad\langle\o_{123}\cap\o_6\cap\o_7=\varnothing\rangle;\quad\o_{123}\cap\o_6=\varnothing;\\
        \langle\o_{123}\cap\o_{45}\cap\o_7=\varnothing\rangle;\quad\langle\o_{123}\cap\o_{45}\cap\o_7=\varnothing\rangle;\quad\o_{45}\cap\o_6=\varnothing.
        \end{gather*}
        Set $\tau_1=\o_{123}$, $\tau_2=\o_{45}$, $\tau_3=\o_6$.
     \end{enumerate}
\item There are exactly two pairs of vectors with the same image. Up
    to a transposition of $\e_1,\e_2$, and $\e_3$ we obtain:
    $\xi(1,1,0)=\xi(1,0,1)$, $\xi(0,1,1)=\xi(1,1,1)$. Then
    $\Im\xi=\{\o_{123},\o_{45},\o_{67}\}$, and
    \begin{gather*}
    \o_{123}\cap\o_{45}=\varnothing;\quad\langle\o_{123}\cap\o_{45}=\varnothing\rangle;\quad\o_{123}\cap\o_{67}=\varnothing;\quad\langle\o_{123}\cap\o_{67}=\varnothing\rangle;\\
    \langle\o_{123}\cap\o_{45}\cap\o_{67}=\varnothing\rangle;\quad\langle\o_{123}\cap\o_{45}\cap\o_{67}=\varnothing\rangle;\quad\o_{45}\cap\o_{67}=\varnothing.
    \end{gather*}
    Set $\tau_1=\o_{123}$, $\tau_2=\o_{45}$, $\tau_3=\o_{67}$.
\end{enumerate}
\item There is a triple of vectors with the same image. Their sum is
    nonzero, therefore up to a transposition of $\e_1,\e_2$, and $\e_3$
    we obtain: $\xi(1,1,0)=\xi(1,0,1)=\xi(1,1,1)$. Then
    $\Im\xi=\{\o_{123},\o_{457},\o_6\}$, and
\begin{gather*}
\o_{123}\cap\o_{457}=\varnothing;\quad\langle\o_{123}\cap\o_{457}=\varnothing\rangle;\quad\langle\o_{123}\cap\o_6\cap\o_{457}=\varnothing\rangle;\quad\o_{123}\cap\o_6=\varnothing;\\
\langle\o_{123}\cap\o_{457}=\varnothing\rangle;\quad\langle\o_{123}\cap\o_{457}=\varnothing\rangle;\quad\o_{457}\cap\o_6=\varnothing.
\end{gather*}
Set $\tau_1=\o_{123}$, $\tau_2=\o_{457}$, $\tau_3=\o_6$.
\end{enumerate}
This enumeration proves the "only if" part. For the "if" part the cases I1,
I2, I3(a), I4(b), and II2 give the mappings for cases 1-5 respectively.
\end{proof}
Now our main result follows from lemma \ref{lem:sr=s} and propositions
\ref{utv:sp>=1},\ref{utv:sp>=2}, and \ref{utv:sp>=3}.
\begin{theo}\label{thmsp}
We have
\begin{enumerate}
\item $s(K)\geqslant 1$ if and only if condition (S1) holds;

\item $s(K)\geqslant 2$ if and only if condition (S2) holds;

\item $s(K)\geqslant 3$ if and only if condition (S3) holds.
\end{enumerate}
\end{theo}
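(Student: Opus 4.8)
The plan is to deduce the theorem directly from the material already assembled, since the integral Buchstaber invariant at the thresholds $1,2,3$ is controlled by its $\z_2$-counterpart. The key reduction is Lemma \ref{lem:sr=s}: for $r=1,2,3$ one has $s(K)\geqslant r$ if and only if $\s(K)\geqslant r$. The nontrivial direction of that lemma rests on Lemma \ref{lem:r23} --- a $0/1$-matrix of size $2\times 2$ or $3\times 3$ whose determinant is odd has determinant $\pm 1$ --- so that a $0/1$-matrix $S$ witnessing $\s(K)\geqslant r$ (condition (A2) of Proposition \ref{sp2comb}) automatically satisfies condition (A) of Proposition \ref{spcomb} and hence witnesses $s(K)\geqslant r$; the reverse implication is the general inequality $s(K)\leqslant\s(K)$. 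So it suffices to prove each equivalence with $s(K)$ replaced by $\s(K)$.

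With that reduction, I would simply invoke the three combinatorial criteria for $\s(K)$ proved above. Proposition \ref{utv:sp>=1} states $\s(K)\geqslant 1$ if and only if $N(K)\neq\varnothing$, i.e. condition (S1) holds; Proposition \ref{utv:sp>=2} states $\s(K)\geqslant 2$ if and only if $N(K)$ contains one of the two configurations listed in condition (S2); and Proposition \ref{utv:sp>=3} states $\s(K)\geqslant 3$ if and only if $N(K)$ contains one of the five configurations listed in condition (S3). Combining each of these with Lemma \ref{lem:sr=s} yields, respectively, statements (1), (2), (3) of the theorem, and the proof is complete.

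There is no real obstacle remaining at this point: the theorem is a harvesting step. If one asks where the difficulty actually sits, it is two steps earlier. The first genuine ingredient is Proposition \ref{utv:xi}, which translates the linear-algebra condition (A2) into the existence of a map $\xi\colon\z_2^k\setminus\{0\}\to N(K)$ whose images along every minimal linear dependence have empty common intersection. The second, and by far the most laborious, is the exhaustive case analysis behind Proposition \ref{utv:sp>=3}: one must list all the ways the seven nonzero vectors of $\z_2^3$ can be mapped into $N(K)$, keeping track of the seven ``sum-zero'' triples, discard redundant constraints, and show that up to a change of basis exactly the five normal forms in (S3) arise. Once that bookkeeping is done, Lemma \ref{lem:sr=s} transports everything from $\s$ to $s$ with no further effort, which is all the present theorem requires.
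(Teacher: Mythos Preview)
Your proposal is correct and follows exactly the paper's own argument: the theorem is stated as an immediate consequence of Lemma~\ref{lem:sr=s} together with Propositions~\ref{utv:sp>=1}, \ref{utv:sp>=2}, and~\ref{utv:sp>=3}. Your additional commentary on where the actual work lies (Proposition~\ref{utv:xi} and the case analysis in Proposition~\ref{utv:sp>=3}) is accurate and matches the paper's structure.
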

\begin{cor}We have
\begin{enumerate}
\item $s(K)=0$ if and only if $N(K)\ne\varnothing$ (equivalently,
    $K=\Delta^n$);
\item $s(K)=1$ if and only if any two and any three subsets in $N(K)$
    intersect;
\item $s(K)=2$ if and only if some two or three subsets in $N(K)$ do not
    intersect and $N(K)$ does not contain any of $5$ subsets from
    proposition \ref{utv:sp>=3}.
\end{enumerate}
\end{cor}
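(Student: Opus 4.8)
The plan is to read off all three parts from Theorem~\ref{thmsp} together with Propositions~\ref{utv:sp>=1}--\ref{utv:sp>=3} by pure logical bookkeeping, with no further geometric or combinatorial input. Since $s(K)$ is a non-negative integer, for every $r\geqslant 1$ we have $s(K)=r$ if and only if $s(K)\geqslant r$ and $s(K)\not\geqslant r+1$, and $s(K)=0$ if and only if $s(K)\not\geqslant 1$. By Theorem~\ref{thmsp} this rewrites the cases $s(K)=0$, $s(K)=1$, $s(K)=2$ as, respectively: the failure of (S1); (S1) together with the failure of (S2); and (S2) together with the failure of (S3). So everything reduces to spelling out (S1), (S2), (S3) and their negations in terms of $N(K)$.

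For part~1: by Proposition~\ref{utv:sp>=1} condition (S1) is the statement $N(K)\neq\varnothing$; negating it, and recalling from Section~2.1 that $N(K)=\varnothing$ is equivalent to $K$ being a simplex $\Delta^n$, we obtain $s(K)=0$ if and only if $K=\Delta^n$, as the parenthetical clause asserts.

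For part~2: by Proposition~\ref{utv:sp>=2}, condition (S2) holds exactly when $N(K)$ contains either a disjoint pair $\{\tau_1,\tau_2\}$ or a triple $\{\tau_1,\tau_2,\tau_3\}$ with $\tau_1\cap\tau_2\cap\tau_3=\varnothing$. Hence the failure of (S2) says precisely that no two elements of $N(K)$ are disjoint and no three of them have empty common intersection, i.e. any two and any three subsets in $N(K)$ intersect; conjoined with (S1), this is part~2. For part~3: (S2) holding is, as just described, the statement that some two or some three subsets of $N(K)$ do not intersect, while by Proposition~\ref{utv:sp>=3} the failure of (S3) is exactly the statement that $N(K)$ contains none of the five families of subsets listed there; together these give part~3.

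This argument presents no real obstacle, being a mechanical negation of the ``$N(K)$ contains one of the following subsets'' conditions. The only point deserving a moment's care is keeping the hypothesis (S1) alongside the intersection conditions in part~2, so as to rule out $K=\Delta^n$, for which the intersection conditions hold vacuously while $s(K)=0$.
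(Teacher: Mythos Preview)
Your proposal is correct and follows exactly the intended route: the paper gives no separate proof of the corollary, treating it as an immediate logical unpacking of Theorem~\ref{thmsp} together with Propositions~\ref{utv:sp>=1}--\ref{utv:sp>=3}, which is precisely what you do. Your final remark about needing to keep (S1) in part~2 is apt (and in the same spirit your part~1 silently corrects the evident misprint ``$N(K)\ne\varnothing$'' for ``$N(K)=\varnothing$'' in the stated corollary).
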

\section{Problems}
Theorem \ref{thmsp} naturally leads to the following problems.
\begin{problem}
To classify all simplicial complexes $K$ such that $s(K)=2$.
\end{problem}
Minimal non-simplices are closely related to other combinatorial
characteristics of simplicial complexes such as \emph{bigraded Betti numbers}
$$
\beta^{-i,2j}(K)={\rm rank}\,
{\rm Tor}^{-i,2j}_{\mathbb Z[\ib{v}_1,\dots,\ib{v}_m]}(\z[K],\z)={\rm
rank}\,{\rm H}^{-i,2j}[\Lambda[u_1,\dots,u_m]\otimes \z[K],d],
$$
where ${\rm bideg}\, u_i=(-1,2)$, ${\rm bideg}\, v_i=(0,2)$, $du_i=v_i$,
$dv_i=0$. For example, $\sum_j\beta^{-1,2j}=|N(K)|$.
\begin{problem}
To find a criterion for $s(K)=2$ in terms of bigraded Betti numbers.
\end{problem}
Unlike simplicial complexes for a simple $n$-polytope $P$ with $m$ facets
$s(P)=1$ if and only if $P=\Delta^n$ (equivalently, $m-n=1$). The case
$s(P)=2$ is much more complicated. It was shown in \cite{Er09} that
$$
s(C^n(m)^*)=2\quad\text{ for }\quad2\leqslant m-n\leqslant2+\frac{n-13}{48},
$$
where $C^n(m)$ is a cyclic polytope. In particular, for each $k\geqslant 2$
there exists a polytope with $m-n=k$ and $s(P)=2$. Moreover, the estimate
$s(P)\geqslant m-\gamma(P)+s(\Delta^{\gamma-1}_{n-1})$ (see \cite{Er09})
implies that if $s(P)=2$, then one of the following holds:\\
1) $P=I\times I$;\\
2) Any two facets of $P$ intersect, and $m<\frac{7}{4}(n+1)+2$;\\
3) $\gamma(P)=m-1$, and $m<\frac{3}{2}(n+1)+1$.
\begin{problem}
To classify all simple polytopes with $s(P)=2$.
\end{problem}

\end{document}